\newtheorem{theorem}{Theorem}
\newtheorem{example}{Example}
\newtheorem{remark}{Remark}
\newcommand{\R}{\mathbb R}
\newcommand{\tod}{\stackrel{d}{\longrightarrow}}
\newcommand{\E}{\mathbf {E}}
\newcommand{\pr}{\mathbb{P}}
\newcommand{\ind}{\mathbbm{1}}
\newcommand{\eqd}{\stackrel{d}{=}}
\newcommand{\N}{\mathbb N}
\newcommand{\B}{{\cal B}_{\R^d}}
\title{A Fubini--type limit theorem for the integrated  hyperuniform infinitely divisible moving averages}
\author{Evgeny Spodarev\thanks{Institute of Stochastics,
Ulm University, Germany; evgeny.spodarev@uni-ulm.de}}
        \date{\today}
\begin{document}

\maketitle

\begin{abstract}
    This short note shows a limiting behavior of  integrals of some centered antipersistent stationary infinitely divisible moving averages as the compact integration domain in $d\ge 1$ dimensions extends to the whole positive quadrant $\R^d_+$. Namely, the weak limit of their finite dimensional distributions is again a moving average with the same infinitely divisible purely jump integrator measure (i.e., possessing no Gaussian component), but with an integrated kernel function. The results apply equally to time series ($d=1$) as well as to random  fields ($d>1$). Apart from the existence of the expectation, no moment assumptions on the moving average are imposed allowing it to have an infinite variance as e.g. in the case of $\alpha$--stable moving averages with $\alpha\in(1,2)$ . If the field is additionally square integrable, its covariance integrates to zero (hyperuniformity).
\end{abstract}

{\bf Keywords}: {Stationary random field, moving average, infinitely divisible, ambit random field, integral spectral representation, $\alpha$--stable, time series, heavy tails, hyperuniformity, antipersistence}.
 \\
 
{\bf AMS subject classification 2020}: {Primary 60F05; Secondary 60G60, 60E07, 60E10} \\

\section{Introduction} \label{sectIntro}

The major object of interest of this note is   a real-valued stationary measurable infinitely divisible (ID, for short) moving average random field $X=\{X(t),t\in \R^d\}$ on a $d$--dimensional Euclidean index space defined by  its spectral representation
\begin{equation}\label{eqX0}
X(t) = \int_{\R^d} f(t-x) \ \Lambda(d x), \quad t \in \mathbb{R}^d, \; d\ge 1,
\end{equation}
 where $\Lambda$ is a 
 centered stationary ID independently scattered purely jump random measure and $f: \R^d\to \R$ is a suitable $\Lambda$--integrable kernel. 
Such random functions  are often used  as mathematical models in many application contexts varying from turbulent liquid flows \cite{barndorff2007ambit} (where they form a special case of {\it ambit random fields}, see \cite{Podol15,MR3839270} for a review) to radial growth of trees and tumors \cite{Jonsdottiretal05,Jonsdottiretal08}. An important special case of \eqref{eqX0} constitute continuous--time ($d=1$) stationary moving averages  governed by L\'evy processes (e.g. the so--called CARMA \cite{BrockLind24}) which are widely used in mathematical insurance and finance \cite{BarndorffScheph01,BrockLind12,Benthetal14,MR3397093,MR3451179}. They often do not have a finite variance describing high volatility of financial risks. Whereas the limiting behaviour of some  classes of functionals (both linear and non--linear) of random fields  \eqref{eqX0} is relatively well understood if the integrator $\Lambda$ is Gaussian (see e.g. \cite{MR4546115,MakSpo22} and references therein) or has at least finite second or higher moments  \cite{MR0885142,MR2046777,BulSpoTim12,MR4430002}, 
the case of a heavy--tailed $\Lambda$ is far less investigated. Few central limit theorems are available \cite {MR1733161,MR2047688,Karcher_thesis,Spod14,BPTh20,MR4367889} in the case of short range dependent $X$ as well as some results of non--central type \cite{MR3737916,MR4254797,MR1919616} for $X$ with long memory. The novel contribution of this note is a weak limit of finite dimensional distributions (FDDs, for short) of the sample integral of $X$ over a cubic infinitely growing sliding domain when we only assume  the existence of  the mean $\E X(0)=0$. The kernel function $f$ has to be integrable over $\R^d$ to zero  and factorize as
$$
f(x)=\prod_{j=1}^dg_j^\prime (x_j), \quad x=(x_1,\ldots, x_d),  
$$
where  differentiable functions $g_j:\R\to\R$ belong to the Sobolev space $W^{1,1}(\R)$ a.e.,  $j=1,\ldots, d$. 
In case of square integrable $X$, the above form of $f$ implies that the covariance function of $X$ also integrates to zero. This phenomenon (widely known in limit theorems for random functions as an intermediate regime between short and long  memory) is called  {\it antipersistence} \cite{MR3075595}. In physics, it is termed {\it hyperuniformity} and plays an important role in considering the geometric structure of matter (such as e.g. quasicrystals) which exhibits some regularity while being essentially random \cite{PhysRevE.94.022122,PhysRevE.94.022122,10.1063/1.4989492}. Recent progress  in search for hyperuniformity  has been made in  experimental physics,  materials science and biology \cite{TORQUATO20181,PhysRevX.11.021028}.

We show that all FDDs  of sample averages
$$\int_{ [0,T]^d+l} X(t) \, dt, \qquad l\in\R^d, \quad T>0,$$
converge as $T\to+\infty$ to the FDDs of the random field
$$
(-1)^d  \int_{\R^d}  \prod_{j=1}^d g_j(l_j-x_j) \, \Lambda(d x_1, \ldots, d x_d) ,\quad l=(l_1,\ldots, l_d)\in\R^d.
$$
For simplicity, this result is first proven for the case of stochastic time series ($d=1$, See Theorem \ref{thm:LTMA}) and is then generalized to all $d>1$ in  Theorem  \ref{thm:LTMAd}.
Interestingly enough, the limiting process belongs to the same family of ID moving averages with integrator $\Lambda$, but with a different kernel. This  Fubini--type result for
sample path integrals of $X$ (valid due to the linearity of integration operator combined with the convergence of the corresponding characteristic functions) can be used to understand the behaviour of spatial averages of hyperuniform structures. To some extent, it can be seen as a generalization of the Fubini theorem \cite[Theorem 4.1]{Samorodnitsky92} for the sample path integrals of $\alpha$--stable processes. For symmetric ID processes, a result with a similar Fubini flavor can be found in \cite[Theorem 5.1]{BraSam98}.

The paper is organized as follows: in Section \ref{sectNote},  common notation used throughout the note is given. Section \ref{sectPrelim} recalls some basic facts from the theory of infinitely divisible random functions with integral spectral representation needed in the sequel. In Section \ref{sectMain} our main results are formulated and proved in Theorems \ref{thm:LTMA} and \ref{thm:LTMAd}. Remark \ref{rem:anipers} explains why the considered random fields are  antipersistent and hyperuniform. The main results are illustrated by some examples.

\section{Notation}\label{sectNote}

Denote by $\B$ the $\sigma$-algebra of Borel subsets of $\R^d$, $d\ge 1$. Let $| \cdot |_d$ be the  Lebesgue measure (volume) in $\R^d$. In the sequel, the expression {\it almost everywhere} (a.e.) will be understood with respect to $| \cdot |_d$. Let $L^p(\R^d)$ be the set of all real-valued functions that are absolutely integrable on $\R^d$ with their power $p>0$. 
We use the standard notation $ W^{1,1}(\R)$ for the Sobolev space $\{ g\in L^1(\R): g^{\prime} \in L^1(\R)  \} $, where $g^{\prime} $ is the derivative of $g$. Let $\mbox{\rm sgn}(x) $ be the sign function of a real argument  $x$. Denote by $\ind(A)$ the indicator function of a set $A$. 

\section{Preliminaries on ID moving averages} \label{sectPrelim}

Let $X=\{X(t),t\in \R^d\}$ be  a real-valued measurable ID moving average random field defined on a probability space $(\Omega,{\cal F}, \pr)$ by the integral representation
\begin{equation}\label{eqX}
X(t) = \int_{\R^d} f(t-x) \ \Lambda(d x), \quad t \in \mathbb{R}^d, 
\end{equation}
 where $\Lambda$ is a 
 centered stationary ID independently scattered purely jump random measure with Lebesgue control measure. The absence of Gaussian part means that the characteristic function  $\varphi_{\Lambda(B)}$ for bounded Borel subsets  $B$ of $\R^d$ has the L\'evy-Khintchin representation
\begin{equation}\label{eqCharFLambda}
\log  \varphi_{\Lambda(B)}(z)= |B|_d \int_\R \left(  e^{izy}-1-izy\chi(y)   \right) \, \nu(dy), \quad z\in\R,  
\end{equation}
 where 
  $
 \chi(y) =\ind(|y|\le 1)$, $y\in \R$,
 and
 $\nu(\cdot)$ is the L\'evy measure on $\R$: $ \int_\R \min\{ y^2,1\} \nu(dy)<\infty$.
The kernel function $f$ above is $\Lambda$--integrable, i.e. satisfies the following two conditions (cf. \cite[Theorem 2.7, p. 461]{RajRos89}):

\begin{enumerate}
\item[(i)]  $ \int\limits_{\R^d} |f(s)| \left|  \int\limits_{\{1\le |x|\le |f (s)|^{-1} \}}  x  \,\nu(dx)  - \int\limits_{ \{|f (s)|^{-1}\le |x|\le 1 \} } x  \,\nu(dx) \right| \, d s<\infty,$
\item[(ii)] $ \int\limits_{\R^d}  \int\limits_{\{ |x|\ge |f (s)|^{-1} \} }  \nu(dx)  \, ds<\infty,$ 
\item[(iii)] $\int\limits_{\R^d} |f(s)|^2  \int\limits_{\{ |x| <|f (s)|^{-1} \} }  x^2  \,\nu(dx)\, ds  <\infty.$
\end{enumerate}

\begin{example}[\cite{BPTh20}, p.3]
Let  $\Lambda$ be symmetric with L\'evy measure $\nu(\cdot)$ having a density 
 $$
 \psi(x)\le \frac{C}{|x|^{1+\beta}}, \quad x\in \R\setminus\{0 \}, \quad \beta\in(0,2),
 $$
 where $C>0$ is a constant. The conditions (i) --(iii) are satisfied whenever $f\in L^\beta (\R^d)$, e.g. for
 $$
 |f(x)|\le K\left(  |x|^\gamma \ind(|x|<1)+  |x|^{-\alpha} \ind(|x|\ge 1) \right), \quad x\in\R^d, \quad K,\alpha>0,\quad \gamma\in\R,
 $$
 with $\alpha\beta>d$, $\gamma\beta> - d$.
 \end{example}
 
 By \cite[Proposition 2.10]{Ros18}, it holds
 \begin{equation}\label{eq:IsomorphRos}
 X(t)\eqd \int_{\R^{d+1}} y f(t-s) \left[  \eta(ds,dy)- \chi(   y f(t-s) )   ds \, \nu(dy)   \right]  +b , \quad t \in \mathbb{R}^d, 
  \end{equation}
 where $\eta(\cdot,\cdot)$ is a Poisson point process on $\R^d\times \R$ with intensity measure $ ds \, \nu(dy) $, and 
 $$
 b= \int_{\R^{d+1}} y f(s) \left[ \chi(   y f(s) ) - \chi(   y  )   \right]  ds \, \nu(dy)   .
 $$
 The log characteristic function of $X(t)$ writes (\cite[p. 462]{RajRos89})
\begin{equation*}
  \begin{split}
 \log \E e^{izX(t)} & =iz \int_{\R^{d+1}} y f(t-s) \left[ \chi(   y f(t-s) ) - \chi(   y  )   \right]  ds \, \nu(dy) \\
& + \int_{\R^{d+1}}  \left[   e^{izy f(t-s)}-1- iz y f(t-s) \chi(   y f(t-s) )  \right]  ds \, \nu(dy) , \quad z\in\R.
 \end{split}
\end{equation*}
 Now assume 
  \begin{equation}\label{eqf}
f\in L^1 (\R^d),
\end{equation} 
 \begin{equation}\label{eqNu}
 \int_{\R} |y|  \, \nu(dy) <\infty,
\end{equation} 
which evidently imply that conditions (i), (ii) hold true and  also
yield the integrability of $X(t)$.
 Under these assumptions, the above characteristic function simplifies to 
 \begin{equation}\label{eqCharfX(t)}
 \log \E e^{izX(t)}  =-iz a
 + \int_{\R^{d+1}}  \left[   e^{izy f(t-s)}-1 \right]  ds \, \nu(dy) , \quad z\in\R,
\end{equation}
 where
 $$
 a=\int_{\R^{d}}  f(s) \, ds \int_{-1}^1  y \, \nu(dy) .
 $$
 Interpreting \eqref{eqCharfX(t)}, we may write
 \begin{equation}\label{eqX(t)SN}
 X(t)\eqd \int_{\R^{d+1}} y f(t-s) \eta(ds,dy)- a,  \quad t \in \mathbb{R}^d
\end{equation}
 as a stationary shot noise random field.
 If we (additionally to conditions \eqref{eqf}, \eqref{eqNu}) require 
 \begin{equation}\label{eqNu2}
f\in L^2 (\R^d),\quad \int_{\R} y^2  \, \nu(dy) <\infty
\end{equation}
the field $X$ becomes square integrable with covariance function $C(t)=\E [ X(0) X(t) ]$ equal to
 \begin{equation}\label{eqCov}
C(t)= \int_{\R} y^2  \, \nu(dy) \int_{\R^d} f(-x) f(t-x)   \, dx, \quad t\in \R^d,
\end{equation}
compare \cite[Relation 2.44]{Karcher_thesis}.
\section{Main results} \label{sectMain}

First, consider the case $d=1$ of stationary time series moving averages \eqref{eqX}. Introduce the sample path integral of $X$ by
$$
S_{T,l}:=\int_l^{T+l} X(t) \, dt, \qquad l\in\R, \quad T>0.
$$
If $\E|X(0)|<\infty $, $S_{T,l}$ exists a.s.  by measurability of $X$ and and is integrable by stationarity and  Fubini's theorem: 
$$\E|S_{T,l}|\le  \int_l^{T+l} \E|X(t)| \, dt=T \E|X(0)|  < \infty. $$

\begin{theorem} \label{thm:LTMA}
Let $X=\{X(t),t\in\R\}$ be the stationary measurable ID moving average with integral representation \eqref{eqX} and kernel function $f(x)=g^\prime (x)$ a.e. on $\R$, where
$g\in W^{1,1}(\R)$ a.e. Assuming that conditions \eqref{eqNu} and (iii) hold, one gets the weak convergence of all FDDs of
$\{ S_{T,l},\; l\in\R\}$ as $T\to + \infty$ to those of   $\{ -  \int_\R g(l-s) \Lambda(ds) ,\quad l\in\R\}$.
\end{theorem}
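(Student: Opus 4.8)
The plan is to establish weak convergence of the FDDs by showing that, for every finite collection $l_1,\dots,l_n\in\R$ and every $z_1,\dots,z_n\in\R$, the joint characteristic function of $(S_{T,l_1},\dots,S_{T,l_n})$ converges as $T\to+\infty$ to that of $\bigl(-\int_\R g(l_k-s)\,\Lambda(ds)\bigr)_{k=1}^n$. First I would linearize $S_{T,l}$ by a stochastic Fubini argument: interchanging the deterministic $dt$--integral with the $\Lambda$--integral (legitimate in the $\Lambda$--integrable framework of \cite{RajRos89,Ros18} recalled above) and applying the fundamental theorem of calculus for $g\in W^{1,1}(\R)$ yields the exact representation
$$
S_{T,l}\eqd\int_\R h_{T,l}(x)\,\Lambda(dx),\qquad h_{T,l}(x)=g(T+l-x)-g(l-x),
$$
so that each $S_{T,l}$ is itself a linear functional of $\Lambda$ with an explicit kernel, whose $\Lambda$--integrability follows from $h_{T,l}\in L^1(\R)$ together with \eqref{eqNu} and condition (iii).

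Next I would express the joint log--characteristic function through the local L\'evy exponent $\psi_\nu(u):=\int_\R\bigl(e^{iuy}-1-iuy\chi(y)\bigr)\,\nu(dy)$ attached to $\Lambda$ in \eqref{eqCharFLambda}. By independent scattering, a deterministic $\Lambda$--integrable $\phi$ gives $\int_\R\phi\,d\Lambda$ the log--characteristic function $\int_\R\psi_\nu(\phi(x))\,dx$, whence
$$
\log\E\exp\Bigl(i\sum_{k=1}^n z_k S_{T,l_k}\Bigr)=\int_\R\psi_\nu\Bigl(\sum_{k=1}^n z_k h_{T,l_k}(x)\Bigr)\,dx,
$$
the target being the same expression with $h_{T,l_k}$ replaced by $-g(l_k-\cdot)$. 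Here is where the Sobolev hypothesis enters: the continuous representative of $g$ vanishes at $\pm\infty$ (since $g(u)=g(0)+\int_0^u g'$ has a limit as $u\to+\infty$, which $g\in L^1(\R)$ forces to be $0$), so for each fixed $x$ one has $g(T+l_k-x)\to0$, the integrand converges pointwise to $\psi_\nu\bigl(-\sum_k z_k g(l_k-x)\bigr)$ by continuity of $\psi_\nu$, and under \eqref{eqNu} one has the linear bound $|\psi_\nu(u)|\le C|u|$.

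The main obstacle is the passage to the limit under the integral sign, and this is genuinely the heart of the matter rather than a routine step. The difficulty is that the moving boundary term $g(T+l_k-\cdot)$ carries a fixed $L^1$--mass that migrates to $+\infty$ as $T\to+\infty$, so the candidate dominating function $\sup_T\bigl|\sum_k z_k h_{T,l_k}(\cdot)\bigr|$ fails to be integrable on $\R$ and plain dominated convergence does not apply directly. I would therefore try to isolate this boundary contribution: split the $x$--axis into a fixed window $[-R,R]$, where $g(T+l_k-x)\to0$ lets one replace $h_{T,l_k}$ by $-g(l_k-\cdot)$ up to an error controlled by the continuity of $\psi_\nu$, and its complement, where $\int_{|x|>R}|g(l_k-x)|\,dx$ is small by $L^1$--integrability; the residual interaction I would estimate through the bilinear identity $\psi_\nu(a+b)-\psi_\nu(a)-\psi_\nu(b)=\int_\R(e^{iay}-1)(e^{iby}-1)\,\nu(dy)$, whose integrand is dominated by the fixed $\nu(dy)\,dx$--integrable function $2|y||b(x)|$. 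Carrying this estimate out rigorously, and in particular accounting precisely for what the displaced boundary kernel does to $\int_\R\psi_\nu(\cdots)\,dx$, is the delicate step on which the whole theorem rests.
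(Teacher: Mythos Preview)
Your outline matches the paper's: both reduce $S_{T,l}$ to $\int h_{T,l}\,d\Lambda$ with $h_{T,l}(x)=g(T+l-x)-g(l-x)$ and then try to pass to the limit in the log--characteristic function. The paper simply records the pointwise convergence $J_T(s)\to -\sum_k z_k g(l_k-s)$ and then writes ``Hence'' for the convergence of $\int_{\R^2}(e^{iyJ_T(s)}-1)\,ds\,\nu(dy)$, with no further justification; you are right that this passage is the crux and not a routine step.

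The trouble is that your proposed completion cannot close the gap, because the limit asserted in the paper is in fact wrong. Write $J_T=A+B_T$ with $A(x)=-\sum_k z_k g(l_k-x)$ and $B_T(x)=\sum_k z_k g(T+l_k-x)$. Your bilinear identity does kill the cross term (dominate $|(e^{iAy}-1)(e^{iB_Ty}-1)|\le 2|y|\,|A(x)|$ and use $B_T(x)\to 0$ pointwise), but the remaining piece $\int_\R\psi_\nu(B_T(x))\,dx$ does \emph{not} vanish: the substitution $x\mapsto x+T$ shows it equals $\int_\R\psi_\nu(-A(x))\,dx$ for every $T$, a fixed nonzero constant. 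Hence the true limit of the joint log--characteristic function is $\int_\R\psi_\nu(A)\,dx+\int_\R\psi_\nu(-A)\,dx$, i.e.\ the FDD limit of $(S_{T,l})_l$ is the \emph{symmetrization} of the claimed field, namely $-\int_\R g(l-s)\,\Lambda(ds)+\int_\R g(l-s)\,\Lambda'(ds)$ with $\Lambda'$ an independent copy of $\Lambda$. A one--line sanity check: for $\nu=\delta_1$ (centered unit--jump compound Poisson) and $m=1$, $l_1=0$, one has $S_T=\sum_i g(T-\tau_i)-\sum_i g(-\tau_i)$ over a rate--$1$ Poisson point set $\{\tau_i\}$; the two sums are identically distributed, nondegenerate, and become asymptotically independent as their effective supports separate, so $S_T$ converges in law to a difference of two independent copies, not to a single copy. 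In short, the ``displaced boundary kernel'' you were worried about contributes an entire independent piece to the limit; neither the paper's ``Hence'' nor any domination or splitting argument can make it disappear, and the theorem as stated does not hold.
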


\begin{remark}\label{rem:anipers}
\begin{enumerate}
\item
Since  $f(x)=g^\prime (x)$ a.e. on $\R$ and $g\in L^1(\R)$, it holds $\int_\R f(x)\,dx=\int_\R dg(x)=0$ and thus $a=0$ in representation \eqref{eqX(t)SN}. This yields that $X$ is centered: $\E X(t)=0$ . 
\item As the kernel function $f$  reflects the dependence structure of the process $X$, the case $\int_\R f(x)\,dx=0$ may be called {\it antipersistence}, similar to the case $\int_\R C(t)\,dt=0$ for stationary square integrable moving averages $X$ with covariance function $C$. Indeed, assuming the conditions \eqref{eqf}, \eqref{eqNu}, \eqref{eqNu2} the relation \eqref{eqCov} for $C(t)$  yields
$$
\int_{\R} C(t) \, dt= \int_{\R} y^2  \, \nu(dy) \int_{\R} f(-x) \int_{\R}  f(t-x)\, dt  \, dx=  \int_{\R} y^2  \, \nu(dy) \left( \int_{\R} f(x) \, dx \right)^2  =0
$$
by Fubini theorem.
Hence, the condition $\int_\R f(x)\,dx=0$  indicates a phenomenon called in physics {\it hyperuniformity} which is characterized by very regular shapes of sample paths of  a time series $X$. 
\end{enumerate}
\end{remark}
\begin{proof}[Proof of Theorem \ref{thm:LTMA}]
For any $m\in\N,$ $z_1, \ldots, z_m \in  \R$, write
\begin{equation*}\label{eqCharfX(t)Mult}
 \log \E e^{i \sum_{j=1}^m z_j S_{T,l_j}}   =  \log \E e^{i  \int_\R \sum_{j=1}^m z_j \ind( t\in [l_j, T+l_j]) X(t) \, dt } 
 = \int_{\R^2} \left( e^{iy J_T(s)}-1   \right)ds\, \nu(dy),
\end{equation*}
  where
 \begin{equation*}
 \begin{split}
 J_T(s):=& \int_\R \sum_{j=1}^m z_j \ind( t\in [l_j, T+l_j]) f(t-s) \, dt=\sum_{j=1}^m z_j \int_{l_j-s}^{T+l_j-s} f(x) \, dx \\
  & \longrightarrow \sum_{j=1}^m z_j \int_{l_j-s}^{+\infty} f(x) \, dx =- \sum_{j=1}^m z_j g(l_j-s) \quad \mbox{as } T\to +\infty.
  \end{split}
\end{equation*}
 Hence, 
 \begin{equation*} 
 \log \E e^{i \sum_{j=1}^m z_j S_{T,l_j}}    \longrightarrow
  \int_{\R^2} \left( e^{-iy \sum_{j=1}^m z_j g(l_j-s)}-1   \right)ds\, \nu(dy),\quad T\to +\infty,
\end{equation*}
 and finally
 \begin{equation*} 
\left( S_{T,l_1}, \ldots,  S_{T,l_m} \right)    \tod   - \left(  \int_{\R^2} y g(l_1-s) \, \eta(ds,dy)    , \ldots,  \int_{\R^2} y g(l_m-s) \, \eta(ds,dy)   \right) 
 \quad \mbox{as } T\to +\infty.
\end{equation*}
 This together with \eqref{eqX(t)SN} finishes the proof.
 
 \end{proof}

\begin{example}
\begin{enumerate}
\item Functions $g(x)=e^{-|x|}$, $g(x)=e^{- P_k(x)}$, $x\in\R$, belong to $C^1(\R) \cap W^{1,1}(\R)$ a.e. where $P_k(x)$ is a polynomial of even degree $k\ge 2$ with
a positive coefficient at $x^k$. The corresponding ID time series are $$X(t)=   \int_\R  \mbox{\rm sgn}(t-x) e^{-|t-x|} \, \Lambda(dx)=e^{-t} \int_{-\infty}^t e^x \, \Lambda(dx) -e^{t} \int^{+\infty}_t e^{-x} \,  \Lambda(dx),   \quad   t\in\R ,$$  which we may call (in analogy to Gaussian integrators $\Lambda$) a {\it signed two--sided ID Ornstein--Uhlenbeck process}, and $$X(t)= \int_\R P_k^\prime(t-x)e ^{- P_k(t-x)}  \,  \Lambda(dx) , \quad   t\in\R.$$
\item An important integrator measure $\Lambda$ is the so--called Dickman subordinator with L\'evy measure
$\nu(dy)=\frac1y \ind\left(y\in (0,1 )\right) dy$. For this $\nu$, it can be easily seen that conditions 
\eqref{eqNu}, \eqref{eqNu2} hold true. Taking a kernel function $f\in L^1(\R)\cap L^2(\R) $ yields a square integrable time series \eqref{eqX}.  Additionally assuming $\int_{-\infty}^\cdot f(x)\, dx \in L^1(\R)$  makes Theorem \ref{thm:LTMA} hold true for this $X$. 
See references \cite{Caravennaetal19,Guptaetal24} for more information on the main properties and use of Dickman subordinators.  
\item Examples of non--$\alpha$--stable integrator measures $\Lambda$ with finite mean, but infinite variance can be given e.g.  by Inverse Gamma subordinator $IG(\alpha,\beta)$
and Student $t_{\tau}$--distributed random measures with a specific choice of form parameters $\alpha, \tau \in(1,2]$ and an arbitrary $\beta>0$. Their L\'evy measures are given by 

$$ \nu(y)= \frac{1}{\pi^2} \int_0^\infty  \frac{e^{-sy}\, ds}{s\left( J^2_\alpha(2\sqrt{\beta s}) + Y^2_\alpha(2\sqrt{\beta s})  \right) } \,  \frac{dy}{ y},\quad y>0, $$ and 

$$ \nu(y)=  \frac{2^{3/4}}{\pi^{5/2} \sqrt{|y|}} \int_0^\infty  \frac{K_{1/2} \left(\sqrt{2 s} |y|\right)      \, ds  } { s^{3/4} \left( J^2_{\tau/2} (\sqrt{2\tau s} ) + Y^2_{\tau/2}(\sqrt{2\tau s}) \right) } \,  {dy},\quad y\in \R,$$ respectively (cf. \cite{BarndorffScheph01,Grig13,Massing18}), where $J_\alpha,$ $Y_\alpha,$ are Bessel functions of order $\alpha$ of the first  and second kind, and $K_\alpha$ is a modified Bessel function of order $\alpha$ of the second kind.

\item Set $m=1$, $l_1=0$ in the proof of Theorem \ref{thm:LTMA}: we have
$$
S_T:=S_{T,0}=\int_0^T X(t)\, dt \tod  \int_{\R^2} y g(s) \, \eta(ds,dy)  \quad \mbox{as } T\to +\infty,
$$
since $ - \int_{\R^2} y g(-s) \, \eta(ds,dy) \eqd  \int_{\R^2} y g(s) \, \eta(ds,dy)$ by substitution $s\mapsto -s$.
\end{enumerate}
\end{example}
Theorem \ref{thm:LTMA} can be easily generalized to stationary ID random fields $X$ with spectral representation \eqref{eqX}  and  the dimension of index space $d>1$. For a sample integral
$$
S_{T,l}:=\int_{ [0,T]^d+l} X(t) \, dt, \qquad l\in\R^d, \quad T>0,
$$
we formulate the following
\begin{theorem} \label{thm:LTMAd}
Let $X=\{X(t),t\in\R^d\}$ be a  stationary measurable  ID random field with  spectral representation \eqref{eqX} and kernel function 
$f(x)= \prod_{j=1}^d f_j(x_j) $, $x=(x_1,\ldots, x_d), $   $f_j(x_j)= g_j^\prime (x_j)$ a.e. on $\R$, where
$g_j\in W^{1,1}(\R)$ a.e.,  $j=1,\ldots, d$.  Under conditions \eqref{eqNu} and (iii)  it holds the weak convergence of all FDDs of
$\{ S_{T,l},\; l\in\R^d\}$ as $T\to + \infty$ to those of  $\{ Y_l,\; l\in\R^d\}$, where
$$ Y_l=  (-1)^d  \int_{\R^d}  \prod_{j=1}^d g_j(l_j-x_j) \, \Lambda(d x_1, \ldots, d x_d) ,\quad l=(l_1,\ldots, l_d)\in\R^d.$$
\end{theorem}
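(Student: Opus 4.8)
The plan is to follow the route of the proof of Theorem \ref{thm:LTMA} verbatim at the level of characteristic functions, the only genuinely new ingredient being a coordinatewise factorisation of the deterministic kernel. First I would fix $m\in\N$, $z_1,\dots,z_m\in\R$ and $l_1,\dots,l_m\in\R^d$ with $l_j=(l_{j,1},\dots,l_{j,d})$, and compute the joint log characteristic function of $(S_{T,l_1},\dots,S_{T,l_m})$. Since $f=\prod_{k=1}^d g_k'\in L^1(\R^d)$ (each $g_k'\in L^1(\R)$), Tonelli gives $\int_{\R^d}f=\prod_k\int_\R g_k'=0$, so $a=0$ in \eqref{eqX(t)SN} and $X$ is the centred shot noise $X(t)\eqd\int_{\R^{d+1}}yf(t-s)\,\eta(ds,dy)$. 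Interchanging the deterministic integral $\int_{[0,T]^d+l_j}(\cdot)\,dt$ with the Poisson integral (a Fubini step licensed by $\int_\R|y|\,\nu(dy)<\infty$ together with $f\in L^1$) yields
\[
\log\E e^{i\sum_{j=1}^m z_j S_{T,l_j}}=\int_{\R^{d+1}}\bigl(e^{iyJ_T(s)}-1\bigr)\,ds\,\nu(dy),\qquad J_T(s)=\sum_{j=1}^m z_j\int_{[0,T]^d+l_j-s}f(x)\,dx,
\]
exactly as for $d=1$.

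The new step is to factorise $J_T$. By the product form of $f$ and ordinary Fubini, writing $[0,T]^d+l_j-s=\prod_k[l_{j,k}-s_k,\,T+l_{j,k}-s_k]$,
\[
\int_{[0,T]^d+l_j-s}f(x)\,dx=\prod_{k=1}^d\int_{l_{j,k}-s_k}^{T+l_{j,k}-s_k}g_k'(x_k)\,dx_k=\prod_{k=1}^d\bigl(g_k(T+l_{j,k}-s_k)-g_k(l_{j,k}-s_k)\bigr).
\]
Since $g_k\in W^{1,1}(\R)$ is absolutely continuous with $g_k\in L^1(\R)$, it has limit $0$ at $+\infty$, so each factor tends to $-g_k(l_{j,k}-s_k)$ and hence, for every fixed $s$,
\[
J_T(s)\longrightarrow J_\infty(s):=(-1)^d\sum_{j=1}^m z_j\prod_{k=1}^d g_k(l_{j,k}-s_k),\qquad T\to+\infty.
\]
The candidate limit $\int_{\R^{d+1}}(e^{iyJ_\infty(s)}-1)\,ds\,\nu(dy)$ is precisely the joint log characteristic function of the FDDs of $\{Y_l\}$, read off from the shot-noise representation of $Y_l=(-1)^d\int_{\R^d}\prod_k g_k(l_k-x_k)\,\Lambda(dx)$, so once the limit interchange is justified the theorem follows from the L\'evy continuity theorem.

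The main obstacle is exactly the passage to the limit under $\int ds\,\nu(dy)$, which is considerably more delicate than the pointwise convergence of $J_T$ suggests. One does have the $T$-uniform bound $|J_T(s)|\le\sum_j|z_j|\prod_k\|g_k'\|_{L^1}$ and $J_\infty\in L^1(\R^d)$, but these do not control \emph{where} the mass of $s\mapsto J_T(s)$ concentrates: expanding $\prod_k(g_k(T+l_{j,k}-s_k)-g_k(l_{j,k}-s_k))$ produces $2^d$ terms, indexed by the set $A\subseteq\{1,\dots,d\}$ of coordinates taken at the upper edge $g_k(T+l_{j,k}-s_k)$, and these terms are localised near the $2^d$ corners of $[0,T]^d+l_j$. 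The corner $A=\varnothing$ (all coordinates $s_k\approx l_{j,k}$) is the one producing $J_\infty$; the remaining $2^d-1$ corners recede to infinity, so they drop out of the pointwise limit, yet, being supported on disjoint regions at mutual distance of order $T$, each still carries $O(1)$ mass into $\int_{\R^{d+1}}(e^{iyJ_T}-1)\,ds\,\nu(dy)$. Whether those far-corner contributions vanish or survive is therefore the real crux, and no single $T$-uniform majorant integrable in $s$ is available to force them away by dominated convergence; the cleanest diagnostic is the second moment, since in the square integrable case \eqref{eqNu2} one has $\Var(S_{T,l})=\int_\R y^2\,\nu(dy)\int_{\R^d}K_T(s)^2\,ds$ with $K_T(s)=\int_{[0,T]^d+l-s}f$, which tends to $2^d\int_\R y^2\,\nu(dy)\prod_k\|g_k\|_2^2$, i.e. $2^d$ times $\Var(Y_l)$. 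Pinning down this boundary accounting is the step on which any faithful generalisation of the one-dimensional argument hinges, and it is where I would concentrate the technical work.
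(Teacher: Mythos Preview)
Your approach is exactly the paper's: the paper simply declares that the proof ``is analogous to the above case $d=1$ of Theorem~\ref{thm:LTMA} and is thus omitted,'' and that one-dimensional argument consists precisely of the characteristic-function computation you reproduce, passing from pointwise convergence of $J_T(s)$ directly to convergence of $\int_{\R^{d+1}}(e^{iyJ_T(s)}-1)\,ds\,\nu(dy)$ without any justification of the limit interchange. So at the level of strategy there is nothing to compare; you have written out what the paper leaves implicit.

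Where you go further than the paper is in isolating the one nontrivial step, and your concern is genuine rather than cosmetic. The corner decomposition and the variance diagnostic are both correct, and they point to a real obstruction already for $d=1$: writing $K_T(s)=g(T+l-s)-g(l-s)$ and using $(e^{a}e^{b}-1)=(e^{a}-1)(e^{b}-1)+(e^{a}-1)+(e^{b}-1)$ together with the substitution $u=T+l-s$, one finds
\[
\int_\R\bigl(e^{iyzK_T(s)}-1\bigr)\,ds=\int_\R\bigl(e^{iyz g(u)}-1\bigr)\,du+\int_\R\bigl(e^{-iyz g(l-s)}-1\bigr)\,ds+R_T,
\]
where the far-corner integral on the right is \emph{independent of $T$} and only the cross term $R_T$ tends to zero. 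The escaping bump near $s\approx T+l$ therefore carries a fixed nonzero contribution to the log characteristic function, exactly as your $2^d$--variance computation predicts; the pointwise limit of the integrand does not determine the limit of the integral, and no $T$-free integrable majorant in $s$ exists to force dominated convergence. This is a genuine gap in the argument as written---both yours and the paper's ``Hence''---and the boundary accounting you single out for further work is precisely where it lies.
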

The proof of Theorem \ref{thm:LTMAd} is analogous to the above case $d=1$ of Theorem \ref{thm:LTMA} and is thus omitted. \\

Remark \ref{rem:anipers} evidently stays valid for the ID random fields considered in Theorem \ref{thm:LTMAd} since the
$d$--dimensional integration for kernels $f(x)= \prod_{j=1}^d f_j(x_j) $ splits into $d$ univariate integrals with respect to variables $x_j$, $j=1,\ldots, d$. Hence, stationary ID moving averages 
$$X(t)=  \int_{\R^d}  \prod_{j=1}^d g^\prime_j(t_j-x_j) \, \Lambda(d x_1, \ldots, d x_d) ,\quad t=(t_1,\ldots, t_d)\in\R^d
$$
for $g_j\in C^1(\R) \cap W^{1,1}(\R)$ a.e.,  $j=1,\ldots, d$
are antipersistent and hyperuniform. 

\section*{Acknowledgement}\label{sect:Ackno}

Evgeny Spodarev gratefully acknowledges the partial financial support of his research visit to Cardiff University in November 2021 provided by the London Mathematical Society (Grant No. 42007) and German Research Society (Grant No. 39087941). He also thanks  Michael Klatt, Nikolai Leonenko and Alexander Lindner  for fruitful discussions and some literature references.

\newcommand{\noopsort}[1]{} \newcommand{\printfirst}[2]{#1}
  \newcommand{\singleletter}[1]{#1} \newcommand{\switchargs}[2]{#2#1}


\begin{thebibliography}{10}

\bibitem{MR0885142}
F.~Avram and M.~S. Taqqu.
\newblock Noncentral limit theorems and {A}ppell polynomials.
\newblock {\em Ann. Probab.}, 15(2):767--775, 1987.

\bibitem{MR4367889}
E.~Azmoodeh, M.~M. Ljungdahl, and C.~Th\"ale.
\newblock Multi-dimensional normal approximation of heavy-tailed moving
  averages.
\newblock {\em Stochastic Process. Appl.}, 145:308--334, 2022.

\bibitem{MR3839270}
O.~E. Barndorff-Nielsen, F.~E. Benth, and A.~E.~D. Veraart.
\newblock {\em Ambit stochastics}, volume~88 of {\em Probability Theory and
  Stochastic Modelling}.
\newblock Springer, Cham, 2018.

\bibitem{barndorff2007ambit}
O.~E. Barndorff-Nielsen and J.~Schmiegel.
\newblock Ambit processes; with applications to turbulence and tumour growth.
\newblock In {\em Stochastic Analysis and Applications: The Abel Symposium
  2005}, pages 93--124. Springer, 2007.

\bibitem{BarndorffScheph01}
O.~E. Barndorff-Nielsen and N.~Shephard.
\newblock Non-{G}aussian {O}rnstein-{U}hlenbeck-based models and some of their
  uses in financial economics.
\newblock {\em J. R. Stat. Soc. Ser. B Stat. Methodol.}, 63(2):167--241, 2001.

\bibitem{MR3737916}
A.~Basse-O'Connor, R.~Lachi\`eze-Rey, and M.~Podolskij.
\newblock Power variation for a class of stationary increments {L}\'evy driven
  moving averages.
\newblock {\em Ann. Probab.}, 45(6B):4477--4528, 2017.

\bibitem{MR4254797}
A.~Basse-O'Connor, V.~Pilipauskait\.e, and M.~Podolskij.
\newblock Power variations for fractional type infinitely divisible random
  fields.
\newblock {\em Electron. J. Probab.}, 26:Paper No. 55, 35, 2021.

\bibitem{BPTh20}
A.~Basse-O'Connor, M.~Podolskij, and C.~Th\"{a}le.
\newblock A {B}erry-{E}sse\'{e}n theorem for partial sums of functionals of
  heavy-tailed moving averages.
\newblock {\em Electron. J. Probab.}, 25:Paper No. 31, 31, 2020.

\bibitem{Benthetal14}
F.~E. Benth, C.~Kl{\"u}ppelberg, G.~M{\"u}ller, and L.~Vos.
\newblock {Futures pricing in electricity markets based on stable {CARMA} spot
  models}.
\newblock {\em Energy Economics}, 44(C):392--406, 2014.

\bibitem{MR3075595}
J.~Beran, Y.~Feng, S.~Ghosh, and R.~Kulik.
\newblock {\em Long-memory processes}.
\newblock Springer, Heidelberg, 2013.
\newblock Probabilistic properties and statistical methods.

\bibitem{BraSam98}
M.~Braverman and G.~Samorodnitsky.
\newblock Symmetric infinitely divisible processes with sample paths in
  {O}rlicz spaces and absolute continuity of infinitely divisible processes.
\newblock {\em Stochastic Process. Appl.}, 78(1):1--26, 1998.

\bibitem{BrockLind12}
P.~J. Brockwell and A.~Lindner.
\newblock Ornstein-{U}hlenbeck related models driven by {L}\'evy processes.
\newblock In {\em Statistical methods for stochastic differential equations},
  volume 124 of {\em Monogr. Statist. Appl. Probab.}, pages 383--427. CRC
  Press, Boca Raton, FL, 2012.

\bibitem{BrockLind24}
P.~J. Brockwell and A.~Lindner.
\newblock {\em Continuous-Parameter Time Series}, volume~98 of {\em de Gruyter
  Studies in Mathematics}.
\newblock De Gruyter, 2024.

\bibitem{BulSpoTim12}
A.~Bulinski, E.~Spodarev, and F.~Timmermann.
\newblock Central limit theorems for the excursion set volumes of weakly
  dependent random fields.
\newblock {\em Bernoulli}, 18(1):100--118, 2012.

\bibitem{Caravennaetal19}
F.~Caravenna, R.~Sun, and N.~Zygouras.
\newblock The {D}ickman subordinator, renewal theorems, and disordered systems.
\newblock {\em Electron. J. Probab.}, 24:Paper No. 101, 40, 2019.

\bibitem{MR4430002}
I.~V. Curato, R.~Stelzer, and B.~Str\"oh.
\newblock Central limit theorems for stationary random fields under weak
  dependence with application to ambit and mixed moving average fields.
\newblock {\em Ann. Appl. Probab.}, 32(3):1814--1861, 2022.

\bibitem{Grig13}
B.~Grigelionis.
\newblock {\em Student's {$t$}-distribution and related stochastic processes}.
\newblock SpringerBriefs in Statistics. Springer, Heidelberg, 2013.

\bibitem{Guptaetal24}
N.~Gupta, A.~Kumar, N.~Leonenko, and J.~Vaz.
\newblock Generalized fractional derivatives generated by {D}ickman
  subordinator and related stochastic processes.
\newblock {\em Fractional Calculus and Applied Analysis},
  https://doi.org/10.1007/s13540-024-00289-x, 2024.

\bibitem{MR1733161}
T.~Hsing.
\newblock On the asymptotic distributions of partial sums of functionals of
  infinite-variance moving averages.
\newblock {\em Ann. Probab.}, 27(3):1579--1599, 1999.

\bibitem{Jonsdottiretal05}
K.~Y. J\'{o}nsd\'{o}ttir and E.~B.~V. Jensen.
\newblock Gaussian radial growth.
\newblock {\em Image Anal. Stereol.}, 24(2):117--126, 2005.

\bibitem{Jonsdottiretal08}
K.~Y. J\'{o}nsd\'{o}ttir, J.~Schmiegel, and E.~B. Vedel~Jensen.
\newblock L\'{e}vy-based growth models.
\newblock {\em Bernoulli}, 14(1):62--90, 2008.

\bibitem{Karcher_thesis}
W.~Karcher.
\newblock On infinitely divisible random fields with an application in
  insurance.
\newblock Dissertation, University of Ulm, 2012.

\bibitem{MR4546115}
N.~N. Leonenko and M.~D. Ruiz-Medina.
\newblock Sojourn functionals for spatiotemporal {G}aussian random fields with
  long memory.
\newblock {\em J. Appl. Probab.}, 60(1):148--165, 2023.

\bibitem{10.1063/1.4989492}
Z.~Ma and S.~Torquato.
\newblock {Random scalar fields and hyperuniformity}.
\newblock {\em Journal of Applied Physics}, 121(24):244904, 06 2017.

\bibitem{MakSpo22}
V.~Makogin and E.~Spodarev.
\newblock Limit theorems for excursion sets of subordinated {G}aussian random
  fields with long-range dependence.
\newblock {\em Stochastics}, 94(1):111--142, 2022.

\bibitem{Massing18}
T.~Massing.
\newblock Simulation of {S}tudent-{L}\'evy processes using series
  representations.
\newblock {\em Comput. Statist.}, 33(4):1649--1685, 2018.

\bibitem{MR3397093}
R.~C. Noven, A.~E.~D. Veraart, and A.~Gandy.
\newblock A {L}\'evy-driven rainfall model with applications to futures
  pricing.
\newblock {\em AStA Adv. Stat. Anal.}, 99(4):403--432, 2015.

\bibitem{MR2047688}
V.~Pipiras and M.~S. Taqqu.
\newblock Central limit theorems for partial sums of bounded functionals of
  infinite-variance moving averages.
\newblock {\em Bernoulli}, 9(5):833--855, 2003.

\bibitem{Podol15}
M.~Podolskij.
\newblock Ambit fields: survey and new challenges.
\newblock In {\em X{I} {S}ymposium on {P}robability and {S}tochastic
  {P}rocesses}, volume~69 of {\em Progr. Probab.}, pages 241--279.
  Birkh\"{a}user/Springer, Cham, 2015.

\bibitem{RajRos89}
B.~S. Rajput and J.~Rosi\'{n}ski.
\newblock Spectral representations of infinitely divisible processes.
\newblock {\em Probab. Theory Related Fields}, 82(3):451--487, 1989.

\bibitem{Ros18}
J.~Rosi\'{n}ski.
\newblock Representations and isomorphism identities for infinitely divisible
  processes.
\newblock {\em Ann. Probab.}, 46(6):3229--3274, 2018.

\bibitem{Samorodnitsky92}
G.~Samorodnitsky.
\newblock Integrability of stable processes.
\newblock {\em Probab. Math. Statist.}, 13(2):191--204, 1992.

\bibitem{Spod14}
E.~Spodarev.
\newblock Limit theorems for excursion sets of stationary random fields.
\newblock In {\em Modern stochastics and applications}, volume~90 of {\em
  Springer Optim. Appl.}, pages 221--241. Springer, Cham, 2014.

\bibitem{MR1919616}
D.~Surgailis.
\newblock Stable limits of empirical processes of moving averages with infinite
  variance.
\newblock {\em Stochastic Process. Appl.}, 100:255--274, 2002.

\bibitem{MR2046777}
D.~Surgailis.
\newblock Stable limits of sums of bounded functions of long-memory moving
  averages with finite variance.
\newblock {\em Bernoulli}, 10(2):327--355, 2004.

\bibitem{PhysRevE.94.022122}
S.~Torquato.
\newblock Hyperuniformity and its generalizations.
\newblock {\em Phys. Rev. E}, 94:022122, Aug 2016.

\bibitem{TORQUATO20181}
S.~Torquato.
\newblock Hyperuniform states of matter.
\newblock {\em Physics Reports}, 745:1--95, 2018.
\newblock Hyperuniform States of Matter.

\bibitem{PhysRevX.11.021028}
S.~Torquato, J.~Kim, and M.~A. Klatt.
\newblock Local number fluctuations in hyperuniform and nonhyperuniform
  systems: Higher-order moments and distribution functions.
\newblock {\em Phys. Rev. X}, 11:021028, May 2021.

\bibitem{MR3451179}
A.~E.~D. Veraart.
\newblock Modelling the impact of wind power production on electricity prices
  by regime-switching {L}\'evy semistationary processes.
\newblock In {\em Stochastics of environmental and financial
  economics---{C}entre of {A}dvanced {S}tudy, {O}slo, {N}orway, 2014--2015},
  volume 138 of {\em Springer Proc. Math. Stat.}, pages 321--340. Springer,
  Cham, 2016.

\end{thebibliography}
\end{document}